\newtheorem{lemma}{Lemma}
\newtheorem{theorem}{Theorem}
\newtheorem{proposition}{Proposition}
\theoremstyle{remark}
\newtheorem{remark}{\bf Remark}
\let\wh\widehat
\let\wt\widetilde
\newcommand\m{\mathrm{m}}
\renewcommand{\d}{{\mathrm d}}
\renewcommand{\Im}{\operatorname{Im}}
\renewcommand{\Re}{\operatorname{Re}}
\newcommand{\sign}{\operatorname{sign}}
\begin{document}

\title{On the Mahler measure of hyperelliptic families}

\author{Marie Jos\'e Bertin}
\address{Universit\'e Pierre et Marie Curie (Paris 6), Institut de Math\'ematiques, 4 Place Jussieu, F-75252 Paris, FRANCE}
\email{marie-jose.bertin@imj-prg.fr}

\author{Wadim Zudilin}
\address{School of Mathematical and Physical Sciences, The University of Newcastle, Callaghan NSW 2308, AUSTRALIA}
\email{wzudilin@gmail.com}

\date{28 January 2016}

\begin{abstract}
We prove Boyd's ``unexpected coincidence'' of the Mahler measures for two families of two-variate polynomials defining curves of genus~2.
We further equate the same measures to the Mahler measures of polynomials $y^3-y+x^3-x+kxy$ whose zero loci define elliptic curves for $k\ne0,\pm3$.
\end{abstract}

\subjclass[2010]{Primary 11F67; Secondary 11F11, 11F20, 11G16, 11G55, 11R06, 14H52, 19F27}
\keywords{Mahler measure, $L$-value, elliptic curve, hyperelliptic curve, elliptic integral}

%\thanks{The second author is supported by Australian Research Council grant DP140101186.}

\maketitle
%==================================================

\setcounter{section}{-1}

\section{Introduction}
\label{sintro}

In his pioneering systematic study \cite{Boy98} of the Mahler measures of two-variate polynomials
D.~Boyd has distinguished several special families, for which the measures are related to the $L$-values of the curves defined by the zero loci of the polynomials.
The two particular families
$$
P_k(x,y)
=(x^2+x+1)y^2+kx(x+1)y+x(x^2+x+1)
$$
and
$$
Q_k(x,y)
=(x^2+x+1)y^2
+(x^4+kx^3+(2k-4)x^2+kx+1)y
+x^2(x^2+x+1)
$$
are nicknamed in \cite{Boy98} as Family~3.2 and Family~3.5B, respectively.
Generically, both $P_k(x,y)=0$ and $Q_k(x,y)=0$ define curves of genus~2
whose jacobians are isogenous to the product of two elliptic curves.
Computing the Mahler measures of $P_k(x,y)$ and $Q_k(x,y)$ numerically and identifying them as rational multiples of the $L$-values
$L'(E_k,0)$, where
\begin{equation}
E_k:y^2=x^3+(k^2-24)x^2-16(k^2-9)x
\label{Ek}
\end{equation}
is isomorphic to one of the elliptic curves in the product for each of the two families,
Boyd observes the ``unexpected coincidence'' $\m(P_k)=\m(Q_{k+2})$ for integer $k$
in the range $4\le k\le33$ (but not for $k\le3$).
The primary goal of this note is to confirm Boyd's observation.

\begin{theorem}
\label{th1}
For real $k\ge4$, we have $\m(P_k)=\m(Q_{k+2})$.
\end{theorem}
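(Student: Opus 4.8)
The plan is to reduce each Mahler measure to a one-dimensional integral over the unit circle, differentiate in the parameter $k$, and compare the resulting period integrals.

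First I would apply Jensen's formula in the variable $y$. Writing $P_k=(x^2+x+1)(y-y_1)(y-y_2)$ with $y_1y_2=x$, and using $\m(x^2+x+1)=0$ (as $x^2+x+1=\Phi_3(x)$ is cyclotomic) together with $|y_1y_2|=|x|=1$ on the torus, one gets $\m(P_k)=\frac1{2\pi}\int_0^{2\pi}\bigl|\log|y_1(e^{i\theta})|\bigr|\,d\theta$, and likewise for $Q_{k+2}$ with $y_1y_2=x^2$. The simplification comes from the substitution $y=\sqrt x\,w$ for $P_k$ (resp.\ $y=xw$ for $Q_{k+2}$): in both cases $w$ satisfies $w+w^{-1}=r(\theta)$ with $r$ \emph{real} on $|x|=1$, and $|y|=|w|$. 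Writing $s=1+2\cos\theta$, one finds $r_P^2=k^2(s+1)/s^2$ and $r_Q=-(s^2+ks+k-3)/s$. Since $|w|\ne1$ exactly when $|r|>2$, in which case $\log|w|=\operatorname{arccosh}(|r|/2)$, both measures acquire the common shape
\[
\m=\frac1\pi\int_{-1}^{3}\frac{\operatorname{arccosh}\bigl(|r(s)|/2\bigr)\,\mathbf 1_{|r|>2}}{\sqrt{(3-s)(s+1)}}\,ds ,\qquad r=r_P \text{ or } r_Q .
\]

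Next I would differentiate in $k$. Using $\partial_k\operatorname{arccosh}(|r|/2)=\partial_k|r|/\sqrt{r^2-4}$, the contribution of the moving endpoints vanishes because $\operatorname{arccosh}$ vanishes there, and after clearing radicals (here $r_P^2-4=(k^2+k^2s-4s^2)/s^2$ and $r_Q^2-4=(s+1)(s+k-3)(s^2+(k+2)s+k-3)/s^2$) each family yields a complete elliptic integral: for $P_k$,
\[
\frac{d}{dk}\m(P_k)=\frac1\pi\int_{s_-}^{3}\frac{ds}{\sqrt{(3-s)\,(k^2+k^2s-4s^2)}},\qquad s_-=\frac{k^2-k\sqrt{k^2+16}}8\in(-1,0),
\]
and the analogous period for $Q_{k+2}$ in which the cubic under the root is replaced by the quartic $(3-s)(s+k-3)(s^2+(k+2)s+k-3)$, with sign and limits dictated by $|r_Q|>2$. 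These are periods of two elliptic curves, each isomorphic to $E_k$ of \eqref{Ek}.

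The main obstacle is to prove that these two period integrals coincide for every $k\ge4$. I would establish this by exhibiting an explicit algebraic correspondence between $v^2=(3-s)(k^2+k^2s-4s^2)$ and $w^2=(3-s)(s+k-3)(s^2+(k+2)s+k-3)$ --- a rational (quadratic) change of variable, i.e.\ a $2$-isogeny --- matching the holomorphic differentials $ds/v$ and $ds/w$ and carrying one cycle of integration onto the other; this is precisely where the shift $k\mapsto k+2$ is consumed, and verifying that the real arcs of integration correspond correctly is the delicate point.

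Granting equality of the derivatives, it remains to pin one value of $\m(P_k)-\m(Q_{k+2})$, which I would do as $k\to\infty$. Both measures grow like $\log k$, and since $\operatorname{arccosh}(|r|/2)\sim\log|r|$ with $\log|r_Q|-\log|r_P|\to\tfrac12\log(s+1)$, the difference tends to $\frac1{2\pi}\int_{-1}^{3}\log(s+1)\,(3-s)^{-1/2}(s+1)^{-1/2}\,ds$. The substitution $s=1+2\cos\theta$ turns this into $\frac1{2\pi}\int_0^{\pi}\log\bigl(4\cos^2(\theta/2)\bigr)\,d\theta=0$. Hence $\m(P_k)-\m(Q_{k+2})$ is constant in $k$ and vanishes at infinity, proving the equality.
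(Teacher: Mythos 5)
Your overall strategy is exactly the one the paper uses: reduce each measure to a single integral over the circle via Jensen's formula, differentiate in $k$ to obtain complete elliptic integrals, and fix the constant of integration with the asymptotics $\m(P_k),\m(Q_{k+2})=\log k+o(1)$. Your reductions are correct: with $s=1+2\cos\theta$ one does get $r_P^2-4=(k^2+k^2s-4s^2)/s^2$ and $r_Q^2-4=(s+1)(s+k-3)(s^2+(k+2)s+k-3)/s^2$, the set $\{|r_Q|>2\}\cap(-1,3)$ is a single interval precisely when $k\ge4$ (this is where the hypothesis enters), and your closing computation that the difference of the two $\log k$ asymptotics contributes $\frac1{2\pi}\int_0^\pi\log(4\cos^2(\theta/2))\,\d\theta=0$ is fine.

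The genuine gap is at the step you yourself flag as ``the main obstacle'': you assert, but do not prove, that the two complete elliptic integrals coincide. Saying that both are ``periods of elliptic curves isomorphic to $E_k$'' and that a suitable quadratic correspondence ``exists'' is not a proof --- two period integrals on isomorphic curves can differ by a nontrivial rational factor or correspond to different cycles, and indeed for $k<4$ the $Q$-integral degenerates into an \emph{incomplete} elliptic integral and the identity fails, so the matching of the arcs of integration cannot be waved away. This equality of periods is the entire content of the theorem, and the paper supplies it explicitly: after the M\"obius substitution $t=(v+2k(k+1))/(v-4k)$ both derivatives become integrals of $\d v/\sqrt{-(v+12)(v^2+k^2v-4k^2)}$, over $[-12,-k(k-\sqrt{k^2+16})/2]$ for $P_k$ and over $(-\infty,-k(k+\sqrt{k^2+16})/2]$ for $Q_{k+2}$, and the involution $v\mapsto-4(3v+4k^2)/(v+12)$ of that curve interchanges the two arcs (note these changes of variable are fractional-linear, i.e.\ the curves are isomorphic, not merely $2$-isogenous as you suggest). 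Until you exhibit such an explicit correspondence and verify the endpoint matching, the proof is incomplete.
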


Note that for $k\ne0,\pm3$ the curve $E_k$ is elliptic and it is isomorphic to the elliptic curve $R_k(x,y)=0$, where the polynomial
$$
R_k(x,y)=y^3-y+x^3-x+kxy
$$
is tempered\,---\,all the faces of its Newton polygon are represented by cyclotomic polynomials.
The elliptic origin of the family $R_k(x,y)$ and Beilinson's conjectures predict \cite{Boy98,RV99} that, apart from a finite set of $k$,
the measure $\m(R_k)$ is $\mathbb Q$-proportional to the $L$-value $L'(E_k,0)$ for $k\in\mathbb Z$ (in fact, even for $k$ such that $k^2\in\mathbb Z$
as in any such case the curve $R_k(x,y)=0$ possesses the model defined over~$\mathbb Z$).
Our next result unites the predictions with the findings of Boyd in~\cite{Boy98}.

\begin{theorem}
\label{th2}
For real $k$ satisfying $|k|\ge16/(3\sqrt3)=3.0792\dots$, we have $\m(P_k)=\m(R_k)$.
\end{theorem}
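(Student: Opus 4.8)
The plan is to reduce both Mahler measures to period integrals of the elliptic curve $E_k$ and to compare them by differentiation in the parameter~$k$. First I would record that the leading $y$-coefficients are cyclotomic ($x^2+x+1$ for $P_k$ and $1$ for $R_k$), so that, by Jensen's formula in $y$, $\m(P_k)$ and $\m(R_k)$ are the integrals over $|x|=1$ of the sums of $\log^+|y_j(x)|$ over the roots $y_j$ of the respective polynomials. The crucial preliminary step is to locate the intersections of the zero loci with the unit torus $\mathbb T^2=\{|x|=|y|=1\}$. Writing $x=e^{i\theta}$, $y=e^{i\psi}$, one checks that $R_k(x,y)=0$ with $k$ real reduces, on the relevant branch, to $k=8\sin^2\theta\cos\theta$, whose maximum $16/(3\sqrt3)$ is attained at $\cos\theta=1/\sqrt3$, while $P_k(x,y)=0$ forces $|k|\le3$. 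Hence for $|k|\ge 16/(3\sqrt3)$ neither curve meets $\mathbb T^2$; consequently the number of roots $y_j(x)$ lying inside $|y|<1$ is constant as $x$ runs over $|x|=1$, and both $\m(P_k)$ and $\m(R_k)$ are real-analytic in $k$ on each of the components $k\ge 16/(3\sqrt3)$ and $k\le-16/(3\sqrt3)$.

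Next I would differentiate under the integral sign. Since $\partial_k R_k=xy$, one finds
\begin{equation*}
\frac{\d}{\d k}\,\m(R_k)=\Re\,\frac1{(2\pi i)^2}\iint_{\mathbb T^2}\frac{\d x\,\d y}{R_k(x,y)},
\end{equation*}
and evaluating the inner integral by residues in $y$ turns the right-hand side into the real part of a period $\int_{\Gamma_R}\omega$ of the holomorphic differential $\omega=\d x/\partial_y R_k$ on $E_k=\{R_k=0\}$, taken over the cycle $\Gamma_R$ lying above $|x|=1$ with $|y|<1$. Similarly, from $\partial_k P_k=x(x+1)y$ one obtains $\frac{\d}{\d k}\m(P_k)$ as the real part of the period of $(x+1)\,\d x/\partial_y P_k$ over the analogous cycle $\Gamma_P$ on the genus-$2$ curve $C_P=\{P_k=0\}$; because $\operatorname{Jac}(C_P)$ is isogenous to a product containing $E_k$, this differential is a rational multiple of the pullback of $\omega$, so $\frac{\d}{\d k}\m(P_k)$ is again the real part of a period of $E_k$.

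It then remains to match the two periods and to fix the constant of integration. Using the explicit birational maps $C_P\dashrightarrow E_k$ and $\{R_k=0\}\xrightarrow{\sim}E_k$, one identifies the differentials up to the correct rational factor and shows that $\Gamma_P$ and $\Gamma_R$ are homologous on $E_k$; here the hypothesis $|k|\ge 16/(3\sqrt3)$ is exactly what keeps both cycles away from $\mathbb T^2$, so that they deform continuously and represent a fixed homology class. This yields $\frac{\d}{\d k}\m(P_k)=\frac{\d}{\d k}\m(R_k)$, whence $\m(P_k)-\m(R_k)$ is constant on each component. The constant is evaluated by letting $k\to+\infty$: the dominant monomials $kx(x+1)y$ and $kxy$ give $\m(P_k)=\log|k|+o(1)$ and $\m(R_k)=\log|k|+o(1)$, so the constant is $0$ and the identity follows.

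I expect the main obstacle to be the period-matching of the previous paragraph: pinning down the birational models, verifying that on the genus-$2$ side the differential $(x+1)\,\d x/\partial_y P_k$ really projects onto the $E_k$-factor and not the complementary one, and checking that the two integration cycles correspond under the isogeny. The torus-avoidance threshold makes these cycles well defined and locally constant in $k$, but the precise homological bookkeeping on $E_k$ is the delicate point.
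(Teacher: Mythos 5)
Your overall strategy --- differentiate both measures in $k$, recognise the derivatives as real parts of elliptic periods, match them, and fix the integration constant from the asymptotics $\m(P_k),\m(R_k)=\log|k|+o(1)$ as $k\to\infty$ --- is exactly the paper's, and your identification of the threshold $16/(3\sqrt3)=\max_{0\le t\le 1}8t(1-t^2)$ as the value below which $R_k=0$ meets the unit torus agrees with Lemmas~\ref{lem:y1}--\ref{lem:y3}. However, your preliminary claim about the first family is false: $P_k(-1,\pm1)=0$ for \emph{every} $k$, and more generally for $x$ on the unit circle with $\cos^2(\arg x/2)$ close to $0$ the discriminant of $P_k(x,\cdot)$ in $y$ is negative, so both roots lie on $|y|=1$ and the curve $P_k=0$ meets $\mathbb T^2$ along whole arcs for every real $k$. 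Hence the cycle $\Gamma_P$ ``above $|x|=1$ with $|y|<1$'' does not exist as a closed torus-avoiding cycle, and the mechanism you invoke for local constancy and for differentiating under the integral sign breaks down on the $P$-side; one must instead work with $\Re\log y_1(x)$ as the paper does, where the arcs with both roots unimodular contribute nothing. What actually governs the $P$-side is not torus avoidance but whether the resulting integral is a \emph{complete} elliptic integral, which happens precisely for $k\ge3$; the binding constraint in the theorem then comes from the $R$-side threshold $16/(3\sqrt3)>3$.

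The second and more serious gap is the period matching, which you correctly flag as delicate but do not carry out: it is the entire content of the proof. The two derivatives come out as periods of two \emph{non-isomorphic} curves --- $\d p/\d k$ is a period of $E_k$ in the model $u^2=(v+12)(v^2+k^2v-4k^2)$, while $\d r/\d k$ is a period of the quadratic twist $\wh E_k: d^2=c(1-c)(64c^2-48c+k^2)$ (see Remark~\ref{rem3}) --- so ``showing $\Gamma_P$ and $\Gamma_R$ are homologous on $E_k$'' is not the right framework. Even granting that the differentials correspond under the isogeny of Jacobians up to a rational factor, one still has to prove that the two real periods are \emph{equal}, not merely commensurable, i.e.\ to pin down the rational factor as $1$; abstract homological bookkeeping cannot do this, and the paper needs an explicit Landen-type computation (Proposition~\ref{equ}, two elementary substitutions) to establish the identity of the integrals. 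Some such explicit verification is unavoidable, and your outline does not supply it.
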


Noticing that $P_{-k}(x,y)=P_k(x,-y)$ and $R_{-k}(x,y)=R_k(-x,-y)$ we conclude that $\m(P_{|k|})=\m(P_k)$ and $\m(R_{|k|})=\m(R_k)$,
hence it is sufficient to establish the identity in Theorem~\ref{th2} and analyse the two polynomial families for positive real $k$ only.

Our analysis of the three polynomial families is performed in Sections~\ref{sP}--\ref{sR}, each section devoted to one family.
We compute the derivatives of the corresponding Mahler measures with respect to the parameter $k$ and make use
of the easily seen asymptotics
\begin{equation}
\m(P_k)=\log|k|+o(1), \quad
\m(Q_k)=\log|k|+o(1) \quad\text{and}\quad
\m(R_k)=\log|k|+o(1)
\label{asymp}
\end{equation}
as $|k|\to\infty$, to conclude about the equality of the Mahler measures themselves. This is a strategy we have successfully employed before in \cite{BZ14}.
Our findings provide one with the reasons of why the ranges for $k$ in Theorems~\ref{th1} and~\ref{th2} cannot be refined, and in Section~\ref{sfinale}
we discuss some further aspects of this ``expected noncoincidence.''

One of our reasons for linking the Mahler measures of hyperelliptic families $P_k(x,y)$ and $Q_k(x,y)$
to that of elliptic family $R_k(x,y)$, not previously displayed, is a hope to actually prove
$\m(R_k)=c_kL'(E_k,0)$ with $c_k\in\mathbb Q^\times$ for some values of~$k$. Armed with the recent formula
for the regulator of modular units \cite{Zud14} and its far-going generalisation for the regulator of Siegel units \cite{Bru15}
established by F.~Brunault, such identities are expected to be automated in the near future.
The main obstacle to produce a single example for $\m(R_k)$ is of purely computational nature: the smallest conductor
of the elliptic curve $E_k$ one gets for $k>3$, $k^2\in\mathbb Z$, is $224=2^5\times7$ when $k=4$.
We further comment on this circumstance and on a related conjecture of Boyd for $\m(Q_{-1})$ in the final section.

\section{The first family}
\label{sP}

We use the equality $\m(P_{|k|})=\m(P_k)$ to reduce our analysis in this section to that for $k\ge0$.

Write $P_k(x^2,y)=x^4\wt P_k(x,y/x)$, where
\begin{align*}
\wt P_k(x,y)
&=(x^2+x^{-2}+1)y^2+k(x+x^{-1})y+(x^2+x^{-2}+1)
\\
&=(x+x^{-1}+1)(x+x^{-1}-1)y^2+k(x+x^{-1})y+(x+x^{-1}+1)(x+x^{-1}-1)
\\
&=(x+x^{-1}+1)(x+x^{-1}-1)(y-y_1(x))(y-y_2(x))
\end{align*}
and
\begin{gather*}
\{y_1(x),y_2(x)\}=\frac{-k(x+x^{-1})\pm\sqrt{\Delta_k(x)}}{2(x+x^{-1}+1)(x+x^{-1}-1)}
\\
\Delta_k(x)=k^2(x+x^{-1})^2-4((x+x^{-1})^2-1)^2.
\end{gather*}
By Vi\`ete's theorem $y_1(x)y_2(x)=1$ implying that $|y_1(x)|=|y_2(x)|=1$ if $\Delta_k(x)\le0$ and
$|y_2(x)|<1<|y_1(x)|$ if $\Delta_k(x)>0$, when we order the zeroes $y_1(x),y_2(x)$ appropriately.
In the latter case
$$
|y_1(x)|=\max\{|y_1(x)|,|y_2(x)|\}=\frac{k|x+x^{-1}|+\sqrt{\Delta_k(x)}}{2|(x+x^{-1})^2-1|}>1
$$
and
$$
|y_2(x)|=\min\{|y_1(x)|,|y_2(x)|\}<1.
$$

In notation $x=e^{i\theta}$, $-\pi<\theta<\pi$, we let $c=\cos^2\theta$, so that $c$ ranges in $[0,1]$.
Since $x+x^{-1}=2\cos\theta$, we get
\begin{align*}
\Delta_k
&=4k^2c-4(4c-1)^2
=-4(16c^2-(8+k^2)c+1)
\\
&=-64(c-c_-(k))(c-c_+(k)),
\end{align*}
where
$$
c_{\pm}(k)=\frac{8+k^2\pm k\sqrt{16+k^2}}{32}.
$$
Because $0<c_-(k)<c_+(k)<1$ for $0<k<3$ and $0<c_-(k)<1<c_+(k)$ if $k>3$, we have $\Delta_k\ge0$ iff
$c_-(k)\le c\le\min\{1,c_+(k)\}$. Note that
$$
|y_1(x)|=\frac{k\sqrt{c}+4\sqrt{-(c-c_-(k))(c-c_+(k))}}{|4c^2-1|}.
$$

Using Jensen's formula and the symmetry $y_1(x)=y_1(x^{-1})$, we obtain
\begin{align*}
p(k)
&=\m(P_k(x,y))=\m(\wt P_k(x,y))
\\
&=\frac1{(2\pi i)^2}\iint_{|x|=|y|=1}\log|\wt P_k(x,y)|\,\frac{\d x}x\,\frac{\d y}y
\displaybreak[2]\\
&=\frac1{2\pi i}\int_{|x|=1}\log|y_1(x)|\,\frac{\d x}x
\displaybreak[2]\\
&=\frac1{\pi i}\int_{\substack{|x|=1\\\Im x>0}}\Re\log y_1(x)\,\frac{\d x}x
\displaybreak[2]\\
&=\frac1{\pi i}\int_{\substack{|x|=1\\\Im x>0}}\Re\log\frac{k|x+x^{-1}|+\sqrt{\Delta_k(x)}}{2(x+x^{-1}+1)(x+x^{-1}-1)}\,\frac{\d x}x
\displaybreak[2]\\
&=\frac1{\pi i}\int_{\substack{|x|=1\\\Im x>0}}\Re\log\frac{k|x+x^{-1}|+\sqrt{\Delta_k(x)}}{2}\,\frac{\d x}x
\\
&=\frac1\pi\,\Re\int_0^\pi\log\Bigl(k|\cos\theta|+\sqrt{-(16\cos^4\theta-(8+k^2)\cos^2\theta+1)}\Bigr)\,\d\theta,
\quad k>0.
\end{align*}
The derivative of the result with respect to $k$ is
\begin{align*}
\frac{\d p(k)}{\d k}
&=\frac1\pi\,\Re\int_0^\pi\frac{|\cos\theta|}{\sqrt{-(16\cos^4\theta-(8+k^2)\cos^2\theta+1)}}\,\d\theta
\\
&=\frac1\pi\,\Re\int_{-1}^1\frac{|t|}{\sqrt{-(16t^4-(8+k^2)t^2+1)}}\,\frac{\d t}{\sqrt{1-t^2}}
\\
&=\frac2\pi\,\Re\int_0^1\frac{t}{\sqrt{-(16t^4-(8+k^2)t^2+1)}}\,\frac{\d t}{\sqrt{1-t^2}}
\\
&=\frac1\pi\,\Re\int_0^1\frac{1}{\sqrt{-(16c^2-(8+k^2)c+1)}}\,\frac{\d c}{\sqrt{1-c}}
\\
&=\frac1{4\pi}\int_{c_-(k)}^{\min\{1,c_+(k)\}}\frac{\d c}{\sqrt{(c-c_-(k))(c-c_+(k))(c-1)}},
\end{align*}
which is a complete elliptic integral.

Performing additionally the change $c=(4-v)/16$ we obtain
\begin{align*}
\frac{\d p(k)}{\d k}
&=\frac1\pi\,\Re\int_{-12}^4\frac{\d v}{\sqrt{-(v+12)(v^2+k^2v-4k^2)}}
\\
&=\frac1\pi\int_{\max\{-12,-k(k+\sqrt{k^2+16})/2\}}^{-k(k-\sqrt{k^2+16})/2}\frac{\d v}{\sqrt{-(v+12)(v^2+k^2v-4k^2)}};
\end{align*}
in particular, we have the following.

\begin{proposition}
For $k\ge3$,
\begin{equation}
\frac{\d p(k)}{\d k}
=\frac1\pi\int_{-12}^{-k(k-\sqrt{k^2+16})/2}\frac{\d v}{\sqrt{-(v+12)(v^2+k^2v-4k^2)}}.
\label{p(k)}
\end{equation}
\end{proposition}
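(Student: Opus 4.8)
The plan is to obtain the statement as an immediate specialization of the display that directly precedes the Proposition, in which $\d p(k)/\d k$ is already expressed as
$$
\frac1\pi\int_{\max\{-12,\,-k(k+\sqrt{k^2+16})/2\}}^{-k(k-\sqrt{k^2+16})/2}\frac{\d v}{\sqrt{-(v+12)(v^2+k^2v-4k^2)}}.
$$
Since the integrand and the upper limit are already in the desired form, the whole task reduces to showing that the lower limit simplifies to $-12$ as soon as $k\ge3$, i.e.\ that
$$
-\frac{k(k+\sqrt{k^2+16})}2\le-12,\qquad\text{equivalently}\qquad k\bigl(k+\sqrt{k^2+16}\bigr)\ge24.
$$

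First I would establish this inequality by a monotonicity argument. Setting $g(k)=k\bigl(k+\sqrt{k^2+16}\bigr)$, one sees that $g$ is a sum of two strictly increasing functions of $k$ on $(0,\infty)$ and is therefore itself strictly increasing there, while a single evaluation gives $g(3)=3\cdot(3+\sqrt{25})=3\cdot8=24$. Hence $g(k)\ge24$ for all $k\ge3$, so $-k(k+\sqrt{k^2+16})/2\le-12$ and the maximum defining the lower limit equals $-12$. The same threshold can be read off conceptually: under the substitution $c=(4-v)/16$ used to produce the preceding display, the endpoint $c=1$ corresponds to $v=-12$ and the endpoint $c=c_+(k)$ to $v=-k(k+\sqrt{k^2+16})/2$, so $\max\{-12,\,-k(k+\sqrt{k^2+16})/2\}=-12$ is exactly the condition $c_+(k)\ge1$, already noted in the text to hold precisely for $k\ge3$.

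There is no genuine obstacle to overcome here; the Proposition is purely a clean restatement of the foregoing derivation on the range $k\ge3$. The only point deserving a word of care is the boundary value $k=3$, where $c_+(3)=1$ and the two candidate lower limits coincide at $-12$, so that the formula is valid at the included endpoint as well.
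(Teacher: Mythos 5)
Your proposal is correct and matches the paper's (implicit) argument: the Proposition is stated there as an immediate consequence of the preceding display, with the lower limit $\max\{-12,\,-k(k+\sqrt{k^2+16})/2\}$ collapsing to $-12$ precisely because $c_+(k)\ge1$ for $k\ge3$. Your explicit verification that $k(k+\sqrt{k^2+16})\ge24$ for $k\ge3$, with equality at $k=3$, is exactly the small check the paper leaves to the reader.
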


\section{The second family}
\label{sQ}

The analysis here is very similar to the one we had in the paper~\cite{BZ14}.
First introduce $Q_{k+2}(x,y)=x^3\wt Q_{k+2}(x,y/x)$, where
\begin{align*}
\wt Q_{k+2}(x,y)
&=(x+x^{-1}+1)y^2+(x^2+x^{-2}+(k+2)(x+x^{-1})+2k)y+(x+x^{-1}+1)
\\
&=(x+x^{-1}+1)y^2+\bigl((x+x^{-1})^2+(k+2)(x+x^{-1})+2(k-1)\bigr)y
\\ &\qquad
+(x+x^{-1}+1).
\end{align*}
Write
$$
\wt Q_{k+2}(x,y)=(x+x^{-1}+1)(y-y_1(x))(y-y_2(x)),
$$
where
$$
\{y_1(x),y_2(x)\}=\frac{-B_k(x)\pm\sqrt{\Delta_k(x)}}{2(x+x^{-1}+1)}
$$
and $B_k(x)=(x+x^{-1})^2+(k+2)(x+x^{-1})+2(k-1)$,
\begin{align*}
\Delta_k(x)
&=B_k(x)^2-4(x+x^{-1}+1)^2
\\
&=(x+x^{-1}+2)(x+x^{-1}+k-2)((x+x^{-1})^2+(k+4)(x+x^{-1})+2k).
\end{align*}
By Vi\`ete's theorem $y_1(x)y_2(x)=1$ implying that $|y_1(x)|=|y_2(x)|=1$ if $\Delta_k(x)\le0$ and
$|y_2(x)|<1<|y_1(x)|$ if $\Delta_k(x)>0$, when we order the zeroes $y_1(x),y_2(x)$ appropriately.
In the latter case
$$
y_1(x)=\frac{-B_k(x)-\sign(B_k(x))\sqrt{\Delta_k(x)}}{2(x+x^{-1}+1)}.
$$
Note that
\begin{align*}
\frac{\d}{\d k}\,\log y_1(x)
&=\frac{\d}{\d k}\,\log\Bigl(B_k(x)+\sign(B_k(x))\sqrt{B_k(x)^2-4(x+x^{-1}+1)^2}\Bigr)
\\
&=\frac{\d}{\d B}\,\log\Bigl(B+\sign(B)\sqrt{B^2-4(x+x^{-1}+1)^2}\Bigr)\bigg|_{B=B_k(x)}\cdot\frac{\d B_k}{\d k}
\\
&=-\frac{\sign(B_k(x))}{\sqrt{B_k(x)^2-4(x+x^{-1}+1)^2}}\cdot(x+x^{-1}+2).
\end{align*}
With the help of Jensen's formula we obtain
\begin{align*}
q(k+2)
&=\m(Q_{k+2}(x,y))=\m(\wt Q_{k+2}(x,y))
\\
&=\frac1{(2\pi i)^2}\iint_{|x|=|y|=1}\log|\wt Q_k(x,y)|\,\frac{\d x}x\,\frac{\d y}y
\displaybreak[2]\\
&=\frac1{2\pi i}\int_{|x|=1}\log|y_1(x)|\,\frac{\d x}x
\displaybreak[2]\\
&=\frac1{\pi i}\int_{\substack{|x|=1\\\Im x>0}}\Re\log y_1(x)\,\frac{\d x}x
\\
&=\frac1\pi\,\Re\int_0^\pi\log y_1(e^{i\theta})\,\d\theta,
\end{align*}
leading to
\begin{align*}
\frac{\d q(k+2)}{\d k}
&=-\frac1\pi\,\Re\int_0^\pi
\frac{\sign(B_k(e^{i\theta}))}{\sqrt{\Delta_k(e^{i\theta})}}\,(2\cos\theta+2)\,\d\theta
\\
&=-\frac1\pi\,\Re\int_{-1}^1
\frac{\sign(2t^2+(k+2)t+k-1)}{\sqrt{4(t+1)(2t+k-2)(2t^2+(k+4)t+k)}}\,\frac{(2t+2)\,\d t}{\sqrt{1-t^2}}
\\
&=-\frac1\pi\,\Re\int_{-1}^1
\frac{\sign((t+1)(2t+k)-1)}{\sqrt{(1-t)(2t+k-2)(2t^2+(k+4)t+k)}}\,\d t.
\end{align*}
Note that for $k>0$ we have
\begin{alignat*}{2}
\Re\int_{-1}^1\sign(2t^2+(k+2)t+k-1)&=-\int_{-1}^{(-k-4+\sqrt{16+k^2})/4}+\int_{1-k/2}^1 &\qquad\text{if}\quad& 0<k\le3,
\\
%\Re\int_{-1}^1
&=-\int_{-1}^{1-k/2}+\int_{(-k-4+\sqrt{16+k^2})/4}^1 &\qquad\text{if}\quad& 3<k<4,
\\
%\Re\int_{-1}^1
&=\int_{(-k-4+\sqrt{16+k^2})/4}^1 &\qquad\text{if}\quad& k\ge4.
\end{alignat*}
Performing the change of variable $t=(v+2k(k+1))/(v-4k)$ we then obtain
$$
\frac{\d q(k+2)}{\d k}
=\frac1\pi\biggl(\int_{-\infty}^{-12}-\int_{-k(k+\sqrt{16+k^2})/2}^{k(1-k)}\biggr)\frac{\d v}{\sqrt{-(v+12)(v^2+k^2v-4k^2)}}
$$
if $0<k\le3$,
$$
\frac{\d q(k+2)}{\d k}
=\frac1\pi\biggl(\int_{-\infty}^{-k(k+\sqrt{16+k^2})/2}-\int_{-12}^{k(1-k)}\biggr)\frac{\d v}{\sqrt{-(v+12)(v^2+k^2v-4k^2)}}
$$
if $3<k<4$, and
\begin{equation}
\frac{\d q(k+2)}{\d k}
=\frac1\pi\int_{-\infty}^{-k(k+\sqrt{16+k^2})/2}\frac{\d v}{\sqrt{-(v+12)(v^2+k^2v-4k^2)}}
\label{q(k)}
\end{equation}
if $k\ge4$.

\begin{remark}
\label{rem1}
The appearance of incomplete elliptic integrals
$$
\int_{-k(k+\sqrt{16+k^2})/2}^{k(1-k)}\frac{\d v}{\sqrt{-(v+12)(v^2+k^2v-4k^2)}}
$$
and
$$
\int_{-12}^{k(1-k)}\frac{\d v}{\sqrt{-(v+12)(v^2+k^2v-4k^2)}}
$$
for $k<4$ hints on why the Mahler measures $q(k+2)$ are possibly not related to the corresponding $L$-values
(see the question marks and the ``half-Mahler'' measures $\m'$ in \cite[Table~9]{Boy98}).
Our next statement refers to the situation when incomplete elliptic integrals do not occur.
\end{remark}

\begin{proposition}
\label{p=q}
For $k\ge4$,
$$
\frac{\d p(k)}{\d k}
=\frac{\d q(k+2)}{\d k}.
$$
\end{proposition}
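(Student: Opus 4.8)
The plan is to recognise both derivatives as $\frac1\pi$ times periods of one and the same differential
$$
\omega=\frac{\d v}{\sqrt{-(v+12)(v^2+k^2v-4k^2)}}
$$
integrated over two complementary real arcs, and then to match these periods by an automorphism of the underlying elliptic curve. Write $v_\pm=-k(k\mp\sqrt{k^2+16})/2$ for the two roots of $v^2+k^2v-4k^2$, so that $v_+v_-=-4k^2$ and $v_+-v_-=k\sqrt{k^2+16}$. The cubic under the radical has roots $v_-$, $-12$, $v_+$, and for $k\ge4$ one checks $v_-<-12<v_+$ (the two outer roots collide, $v_-=-12$, exactly at $k=3$); together with the point at infinity these are the four branch points of the curve $w^2=-(v+12)(v^2+k^2v-4k^2)$. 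The radicand is nonnegative precisely on the two arcs $[-12,v_+]$ and $(-\infty,v_-]$ of $\mathbb P^1(\mathbb R)$, which are ``opposite'' with respect to the four branch points. By \eqref{p(k)} we have $\d p/\d k=\frac1\pi\int_{-12}^{v_+}\omega$, while \eqref{q(k)} gives $\d q(k+2)/\d k=\frac1\pi\int_{-\infty}^{v_-}\omega$, so the claim is that these two periods coincide.

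To identify them I would produce a Möbius involution of the $v$-line permuting the four branch points and interchanging the two arcs. The pairing sending $-12\leftrightarrow v_-$ and $v_+\leftrightarrow\infty$ is realised by
$$
v=\phi(u)=\frac{v_+u+\beta}{u-v_+},\qquad \beta=12(v_+-v_-)-v_+v_-=4k^2+12k\sqrt{k^2+16},
$$
which is an involution since its matrix has trace zero. A direct check gives $\phi(-12)=v_-$, $\phi(v_-)=-12$, $\phi(v_+)=\infty$, $\phi(\infty)=v_+$, so that $\phi$ carries $(-\infty,v_-]$ onto $[-12,v_+]$; indeed, as $u$ runs from $-\infty$ to $v_-$ the image $v=\phi(u)$ runs from $v_+$ down to $-12$.

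The key step is to verify that $\phi$ preserves $\omega$ up to sign, $\phi^*\omega=\pm\omega$. Because $\phi$ sends three of the four branch points to the remaining three, substituting $v=\phi(u)$ factors the pulled-back radicand as a constant multiple of $(u-v_-)(u+12)$ divided by a power of $(u-v_+)$, while the Jacobian $\phi'(u)$ supplies the missing linear factor $(u-v_+)$ and reconstitutes the full cubic $-(u+12)(u^2+k^2u-4k^2)$; the overall constant is pinned to $\pm1$ by the identity $v_+^2+\beta=(v_+-v_-)(v_++12)$. Consequently $\int_{-\infty}^{v_-}\omega=\pm\int_{-12}^{v_+}\omega$, and since both integrals are manifestly positive real numbers the sign must be $+$, which proves the proposition. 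I expect the only delicate point to be this sign-and-branch bookkeeping: the radical $\sqrt{-\prod(v-e_j)}$ carries the leading sign $-1$ of the cubic, which threatens to introduce a spurious factor $\sqrt{-1}$ when one writes the pullback in terms of $\prod(u-e_j)$, so one must track the branches consistently on the two real arcs — but the positivity of the two periods removes any residual ambiguity in the end.
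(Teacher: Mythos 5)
Your proof is correct and follows essentially the same route as the paper: both derivatives are recognised as periods of $\omega=\d v/\sqrt{-(v+12)(v^2+k^2v-4k^2)}$ over the two real arcs on which the radicand is nonnegative, and the arcs are exchanged by a M\"obius involution permuting the four branch points, which pulls $\omega$ back to $\pm\omega$. The only (cosmetic) difference is the choice of involution: the paper pairs $\{-12,\infty\}$ and $\{v_+,v_-\}$ via $v\mapsto-4(3v+4k^2)/(v+12)$, which has coefficients rational in $k$, whereas you pair $\{-12,v_-\}$ and $\{v_+,\infty\}$; your normalisation identity $v_+^2+\beta=(v_+-v_-)(v_++12)$ checks out, so both work.
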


\begin{proof}
We will show that
\begin{multline}
\int_{-12}^{-k(k-\sqrt{16+k^2})/2}\frac{\d v}{\sqrt{-(v+12)(v^2+k^2v-4k^2)}}
\\
=\int_{-\infty}^{-k(k+\sqrt{16+k^2})/2}\frac{\d v}{\sqrt{-(v+12)(v^2+k^2v-4k^2)}}
\label{intpq}
\end{multline}
for $k>3$. On comparing the integrals in \eqref{p(k)} and~\eqref{q(k)} this implies the required coincidence.

The involution
$$
v\mapsto-\frac{4(3v+4k^2)}{v+12}
$$
interchanges $\infty$ with $-12$ and $-k(k+\sqrt{k^2+16})/2$ with $-k(k-\sqrt{k^2+16})/2$.
Applying the change to one of the integrals in \eqref{intpq} we arrive at the other.
\end{proof}

\begin{proof}[Proof of Theorem~\textup{\ref{th1}}]
Proposition \ref{p=q} implies that $p(k)=q(k+2)+C$ for $k\ge4$, with some constant $C$ independent of~$k$.
On using the asymptotics \eqref{asymp} we conclude that $C=0$, and the theorem follows.
\end{proof}

\section{The third family}
\label{sR}

Since $\m(R_{|k|})=\m(R_k)$, we assume that $k\ge0$ throughout the section.

For the elliptic family we write
$$
-y^3R_k\bigl(x/y,1/(xy)\bigr)
=\wt R_k(x,y)=(x+x^{-1})y^2-ky-(x^3+x^{-3}).
$$
This time the zeroes $y_1(x)$ and $y_2(x)$ of the quadratic polynomial $\wt R_k(x,y)$
satisfy
\begin{equation*}
y_1(x)y_2(x)=-\frac{x^3+x^{-3}}{x+x^{-1}}=-(x^2-1+x^{-2})=3-4\cos^2\theta.
%\label{pr-y}
\end{equation*}
We have
\begin{align*}
y_1(x)&=\frac{k+\sqrt{k^2-16\cos^2\theta\,(3-4\cos^2\theta)}}{4\cos\theta},
\\
y_2(x)&=\frac{k-\sqrt{k^2-16\cos^2\theta\,(3-4\cos^2\theta)}}{4\cos\theta},
\end{align*}
so that $|y_1(x)|\ge|y_2(x)|$.

\begin{lemma}
\label{lem:x}
If $k\ge3$ then $\Delta_k(x)\ge0$, so that both $y_1(x)$ and $y_2(x)$ are real.

If $0\le k<3$ then $y_1(x)$ and $y_2(x)$ are complex conjugate to each other for
$$
\frac{3-\sqrt{9-k^2}}8<\cos^2\theta<\frac{3+\sqrt{9-k^2}}8,
$$
so that
$|y_1(x)|=|y_2(x)|=|3-4\cos^2\theta|^{1/2}$ in this case.
Furthermore, $|y_1(x)|=|y_2(x)|>1$ if and only if
\begin{alignat*}{2}
\frac{3-\sqrt{9-k^2}}8&<\cos^2\theta<\frac12 &\quad&\text{for} \;\; 0\le k<2\sqrt2, \\
\frac{3-\sqrt{9-k^2}}8&<\cos^2\theta<\frac{3+\sqrt{9-k^2}}8 &\quad&\text{for} \;\; 2\sqrt2\le k<3.
\end{alignat*}
\end{lemma}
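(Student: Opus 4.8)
The plan is to reduce all three assertions to the sign and the size of the quantity under the square root, viewed as a quadratic in $c=\cos^2\theta\in[0,1]$. From the displayed formulas for $y_1(x)$ and $y_2(x)$ one reads off
$$
\Delta_k(x)=k^2-16c(3-4c)=64c^2-48c+k^2,
$$
an upward-opening parabola in $c$ with vertex at $c=3/8\in[0,1]$ and minimal value $k^2-9$ there. Hence $\min_{c\in[0,1]}\Delta_k=k^2-9\ge0$ exactly when $k\ge3$, and for such $k$ the radical is real and so are both roots $y_1(x),y_2(x)$; this settles the first assertion.

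Next I would treat $0\le k<3$. Solving $64c^2-48c+k^2=0$ gives the two roots $c_\pm=(3\pm\sqrt{9-k^2})/8$, strictly between which $\Delta_k<0$; this is exactly the interval $\tfrac{3-\sqrt{9-k^2}}8<c<\tfrac{3+\sqrt{9-k^2}}8$ in the statement. On it the radical is purely imaginary, and since the denominator $4\cos\theta$ is real we have $y_2(x)=\overline{y_1(x)}$, so the two roots are complex conjugate. Consequently $|y_1(x)|=|y_2(x)|$ and
$$
|y_1(x)|^2=y_1(x)\,\overline{y_1(x)}=y_1(x)y_2(x)=3-4c,
$$
which in particular forces $3-4c\ge0$ on this interval (consistently, its right endpoint satisfies $c_+\le3/4$). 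Thus $|y_1(x)|=|y_2(x)|=|3-4c|^{1/2}$, as claimed.

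For the final equivalence, note that on the conjugacy interval $|y_1(x)|>1$ amounts to $3-4c>1$, i.e.\ to $c<1/2$. Therefore $|y_1(x)|=|y_2(x)|>1$ precisely for $c_-<c<\min\{1/2,c_+\}$, the lower endpoint being unaffected since $c_-\le3/8<1/2$. It remains to compare $c_+$ with $1/2$: because $c_+=(3+\sqrt{9-k^2})/8\le1/2$ is equivalent to $\sqrt{9-k^2}\le1$, that is to $k\ge2\sqrt2$, the minimum equals $1/2$ when $0\le k<2\sqrt2$ and equals $c_+$ when $2\sqrt2\le k<3$. This is exactly the two-case statement.

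All the computations are elementary, so I do not anticipate a genuine obstacle. The only point that needs a little care is the identification $|y_1|^2=y_1y_2$ via conjugacy (rather than the a priori weaker $|y_1|^2=|y_1y_2|$), which simultaneously forces the positivity $3-4c\ge0$ making the formula $|3-4c|^{1/2}$ legitimate; and the recognition that the clean threshold $k=2\sqrt2$ is precisely where $9-k^2=1$, i.e.\ where $c_+$ crosses $1/2$.
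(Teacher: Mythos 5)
Your proof is correct and follows essentially the same route as the paper: the first assertion is exactly the paper's observation that $\max_{0\le c\le1}16c(3-4c)=9$ (equivalently, the vertex value $k^2-9$ of the parabola $64c^2-48c+k^2$), and the rest, which the paper dismisses as ``a mere computation,'' you carry out correctly via the product $y_1y_2=3-4c$ and the comparison of $c_+=(3+\sqrt{9-k^2})/8$ with $1/2$ at the threshold $k=2\sqrt2$. Your explicit remark that conjugacy gives $|y_1|^2=y_1y_2$ (not merely $|y_1y_2|$), forcing $3-4c\ge0$ on the interval, is a worthwhile detail the paper leaves implicit.
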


\begin{proof}
Note that $16\cos^2\theta\,(3-4\cos^2\theta)\le\max_{0\le c\le1}16c(3-4c)=9$, hence
$$
\Delta_k(x)=k^2-16\cos^2\theta\,(3-4\cos^2\theta)\ge0
\qquad\text{if}\quad k\ge3.
$$
The second part of the statement is a mere computation.
\end{proof}

\begin{lemma}
\label{lem:y1}
If $k\ge2\sqrt2$ then $|y_1(x)|\ge1$ for all $x\in\mathbb C:|x|=1$.
\end{lemma}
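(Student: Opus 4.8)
The plan is to bound $|y_1(x)|$ from below by combining the product $y_1(x)y_2(x)=3-4\cos^2\theta$ with the ordering $|y_1(x)|\ge|y_2(x)|$, and to split the unit circle into two regimes according to the value of $c=\cos^2\theta\in[0,1]$. From $|y_1(x)|\ge|y_2(x)|$ and $|y_1(x)|\,|y_2(x)|=|3-4c|$ one immediately gets the universal estimate $|y_1(x)|\ge|3-4c|^{1/2}$, valid whether the roots are real or complex conjugate, so I never need to invoke the real/complex dichotomy on this part. When $c\le\tfrac12$ this already settles the claim, since then $3-4c\ge1$ and hence $|y_1(x)|\ge(3-4c)^{1/2}\ge1$; note that this step uses nothing about $k$. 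Everything therefore reduces to the range $c\in(\tfrac12,1]$, where $|3-4c|^{1/2}<1$ and a genuine argument is required.

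On $c\in(\tfrac12,1]$ I would first verify that the roots are real. Writing $h(c)=16c(3-4c)$ and $\Delta_k=k^2-h(c)$, the function $h$ is decreasing on $[\tfrac38,1]$ with $h(\tfrac12)=8$, so $h(c)<8\le k^2$ for $k\ge2\sqrt2$, giving $\Delta_k>0$ throughout this range. Consequently $|y_1(x)|=(k+\sqrt{\Delta_k})/(4\sqrt c)$, and the desired inequality $|y_1(x)|\ge1$ is equivalent to $\sqrt{\Delta_k}\ge4\sqrt c-k$. If $4\sqrt c\le k$ this is automatic; otherwise both sides are positive, so squaring is an equivalence, and after cancelling $k^2$ and dividing by $8\sqrt c$ the condition collapses to the clean inequality $k\ge 8\sqrt c\,(1-c)$.

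This last inequality is where the threshold is pinned down, and it is essentially the only computation in the proof. Setting $g(c)=8\sqrt c\,(1-c)$, one has $g'(c)=4c^{-1/2}(1-3c)<0$ for $c>\tfrac13$, so $g$ is decreasing on $[\tfrac12,1]$ and attains its maximum at the left endpoint, with $g(\tfrac12)=4/\sqrt2=2\sqrt2$. Hence $g(c)\le2\sqrt2\le k$ for all $c\in(\tfrac12,1]$, which closes the argument. I expect the reduction to $k\ge g(c)$ together with this monotonicity bound to be the main (and really the only) obstacle, everything else being bookkeeping with Vi\`ete's product. It is worth flagging that the constant $2\sqrt2$ surfaces twice and for the same arithmetic reason — once to guarantee $\Delta_k>0$ via $k^2\ge8=h(\tfrac12)$, and once as the maximal value $g(\tfrac12)=2\sqrt2$ of the reduced constraint — which explains why $2\sqrt2$ is the sharp threshold in the statement.
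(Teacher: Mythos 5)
Your proof is correct and follows essentially the same route as the paper's: both arguments reduce the real-root case to the inequality $k\ge8\sqrt c\,(1-c)$ and settle it by the monotonicity of $\sqrt c\,(1-c)$ past $c=1/3$, and both dispose of the remaining range via the Vi\`ete product $|y_1(x)y_2(x)|=|3-4c|\ge1$ for $c\le\tfrac12$. The only difference is cosmetic: you split on $c\lessgtr\tfrac12$ first (which lets the product bound cover real and complex roots uniformly), whereas the paper splits on the sign of the discriminant first.
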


\begin{proof}
Denote $c=\cos^2\theta$ for $x=\exp(i\theta)$, so that our task is to show that
\begin{equation}
|k+\sqrt{k^2-48c+64c^2}|\ge4\sqrt c
\label{ver1}
\end{equation}
for $0\le c\le1$. If $k^2-48c+64c^2\ge0$, meaning that either $k\ge3$ and $c\in[0,1]$ or
$2\sqrt2\le k<3$ and $c\in[0,(3-\sqrt{9-k^2})/8]\cup[(3+\sqrt{9-k^2})/8,1]$,
the inequality \eqref{ver1} is equivalent to
$$
\sqrt{k^2-48c+64c^2}\ge4\sqrt c-k.
$$
The latter inequality holds automatically when the right-hand side is nonpositive, that is, when $c\le k^2/16$.
If $c>k^2/16\ge1/2$ then
$$
\sqrt c(1-c)\le\frac k4\biggl(1-\frac{k^2}{16}\biggr)<\frac k4\cdot\frac12=\frac k8
$$
implying that $k^2-48c+64c^2<(4\sqrt c-k)^2=k^2-8k\sqrt c+16c$, and the required inequality follows.

If $k^2-48c+64c^2<0$ then $|y_1(x)|=|y_2(x)|=|y_1(x)y_2(x)|^{1/2}$ and
$$
|k+\sqrt{k^2-48c+64c^2}|
=|3-4c|^{1/2}.
$$
The latter expression is $\ge1$ whenever $0\le c\le1/2$; this indeed holds true for
$(3-\sqrt{9-k^2})/8<c<(3+\sqrt{9-k^2})/8$ since $2\sqrt2\le k\le 3$ in this case.

The required inequality \eqref{ver1} is thus established.
\end{proof}

\begin{lemma}
\label{lem:y2}
If $k\ge16/(3\sqrt3)=3.0792\dots$ then $|y_2(x)|\le1$ for all $x\in\mathbb C:|x|=1$.
\end{lemma}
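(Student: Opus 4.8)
The plan is to mirror the argument for Lemma~\ref{lem:y1}, reducing the claim to an elementary inequality in the single variable $c=\cos^2\theta\in[0,1]$. Writing $\Delta_k(x)=k^2-48c+64c^2$ as before, so that $|y_2(x)|=|k-\sqrt{\Delta_k(x)}|/(4\sqrt c)$, the task becomes to show that
\[
\bigl|k-\sqrt{k^2-48c+64c^2}\bigr|\le4\sqrt c
\qquad\text{for all }c\in[0,1].
\]
Since $16/(3\sqrt3)>3$, Lemma~\ref{lem:x} guarantees $\Delta_k(x)\ge0$ throughout, so the complex-conjugate case never arises and the radicand is a genuine nonnegative real number; only the real-root situation needs to be treated.

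First I would determine the sign of $k-\sqrt{k^2-48c+64c^2}$ by comparing $c$ with $3/4$: the radicand satisfies $k^2-48c+64c^2\lessgtr k^2$ according as $16c(4c-3)\lessgtr0$, that is, according as $c\lessgtr3/4$. For $c>3/4$ we have $\sqrt{\Delta_k}>k$, so the left-hand side equals $\sqrt{\Delta_k}-k$, and the desired bound $\sqrt{\Delta_k}\le k+4\sqrt c$ follows at once upon squaring, since it reduces to $64c(c-1)\le8k\sqrt c$, whose left-hand side is nonpositive while the right-hand side is nonnegative. This case is therefore immediate.

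The substantive case is $c\le3/4$, where $k-\sqrt{\Delta_k}\ge0$ and one must show $k-4\sqrt c\le\sqrt{\Delta_k}$. When $c\ge k^2/16$ the left-hand side is nonpositive and there is nothing to prove; when $c<k^2/16$, squaring turns the inequality into $k\ge8\sqrt c\,(1-c)$. Here lies the heart of the matter and the main obstacle: unlike in Lemma~\ref{lem:y1}, the relevant range $c<k^2/16$ now contains the maximiser $c=1/3$ of $g(c)=8\sqrt c\,(1-c)$ (indeed $k^2/16\ge16/27>1/3$ for $k\ge16/(3\sqrt3)$), so I cannot exploit monotonicity past $c=1/3$ and must instead invoke the global maximum
\[
\max_{0\le c\le1}8\sqrt c\,(1-c)=\frac{16}{3\sqrt3},
\]
attained at $c=1/3$. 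The hypothesis $k\ge16/(3\sqrt3)$ is precisely this maximal value, which both closes the argument and explains why the bound in Theorem~\ref{th2} is sharp: at $k=16/(3\sqrt3)$ and $c=1/3$ one has $|y_2(x)|=1$, so the constant cannot be lowered.
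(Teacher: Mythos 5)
Your proof is correct and follows essentially the same route as the paper's: reduce the claim to $\sqrt{k^2-48c+64c^2}\ge k-4\sqrt c$ on $[0,1]$, dispose of $c\ge k^2/16$ trivially, and for $c<k^2/16$ square to obtain $8\sqrt c\,(1-c)\le k$, which is exactly the hypothesis since $\max_{0\le c\le1}8\sqrt c\,(1-c)=16/(3\sqrt3)$ is attained at $c=1/3$. The only difference is that you explicitly treat the case $c>3/4$, where $\sqrt{\Delta_k}>k$ and the numerator changes sign, which the paper's proof passes over in silence; that extra care, and your closing remark on sharpness, are welcome but do not change the substance.
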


\begin{proof}
To verify that $k-\sqrt{k^2-48c+64c^2}\le4\sqrt c$, equivalently
\begin{equation}
\sqrt{k^2-48c+64c^2}\ge k-4\sqrt c
\label{ver2}
\end{equation}
for $0\le c\le1$, we first notice that the inequality is trivially true for $c\ge k^2/16$ since
the right-hand side is then nonpositive. If $c<k^2/16$, the inequality \eqref{ver2} after squaring becomes equivalent
to $8\sqrt c(1-c)\le k$. The latter inequality holds true because the maximum of $\sqrt c(1-c)$ is attained at $c=1/3$
and is equal to $2/(3\sqrt3)$.
\end{proof}

\begin{proposition}
\label{prop3a}
If $k\ge16/(3\sqrt3)$ then
\begin{equation}
\frac{\d r(k)}{\d k}
=\frac1\pi\int_0^1\frac{\d c}{\sqrt{c(1-c)(k^2-48c+64c^2)}}.
\label{ell-1}
\end{equation}
\end{proposition}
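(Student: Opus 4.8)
The plan is to reduce $r(k)=\m(R_k)$ to an explicit $\theta$-integral and then differentiate under the integral sign. First I would record that the Mahler measure is unchanged in passing from $R_k$ to $\wt R_k$: multiplying by the monomial $-y^3$ leaves the measure untouched, and the substitution $(x,y)\mapsto(x/y,1/(xy))$ is a finite-to-one surjection of the torus onto itself carrying Haar measure to Haar measure, so $\m(\wt R_k)=\m(R_k)=r(k)$. Writing $\wt R_k(x,y)=(x+x^{-1})(y-y_1(x))(y-y_2(x))$ and applying Jensen's formula in the variable $y$ then yields
$$
r(k)=\frac1{2\pi i}\int_{|x|=1}\bigl(\log|x+x^{-1}|+\log^+|y_1(x)|+\log^+|y_2(x)|\bigr)\,\frac{\d x}x.
$$

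The next step is to activate Lemmas~\ref{lem:y1} and~\ref{lem:y2}. For $k\ge16/(3\sqrt3)$ they guarantee $|y_1(x)|\ge1$ and $|y_2(x)|\le1$ on the whole unit circle, so that $\log^+|y_1(x)|=\log|y_1(x)|$ and $\log^+|y_2(x)|=0$; moreover Lemma~\ref{lem:x} (since $k\ge3$) makes $\Delta_k(x)=k^2-16\cos^2\theta\,(3-4\cos^2\theta)\ge0$ throughout, so $y_1(x)$ is real. The term $\log|x+x^{-1}|$ is independent of $k$ and therefore drops out upon differentiating, leaving
$$
\frac{\d r(k)}{\d k}=\frac1{2\pi i}\int_{|x|=1}\frac{\partial}{\partial k}\log|y_1(x)|\,\frac{\d x}x.
$$
A short computation from $y_1=(k+\sqrt{\Delta_k})/(4\cos\theta)$ gives $\partial_k\log y_1=1/\sqrt{\Delta_k}$, which is real and positive in this range.

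Finally I would pass to $x=e^{i\theta}$ and fold the integral onto $[0,\pi]$ using evenness in $\theta$, then substitute $c=\cos^2\theta$, for which $\d\theta=-\d c/(2\sqrt{c(1-c)})$ on each of the monotone pieces $[0,\pi/2]$ and $[\pi/2,\pi]$. The two pieces contribute equally and combine to produce exactly
$$
\frac{\d r(k)}{\d k}=\frac1\pi\int_0^1\frac{\d c}{\sqrt{c(1-c)(k^2-48c+64c^2)}},
$$
which is~\eqref{ell-1}.

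The computation itself is routine; the one point demanding care is the legitimacy of differentiating under the integral sign, which is where the restriction $k\ge16/(3\sqrt3)$ is felt. For $k>16/(3\sqrt3)$ the strict inequalities $|y_1(x)|>1$ and $|y_2(x)|<1$ hold off a set of measure zero, so the nonsmooth truncations $\log^+$ are inactive on a neighborhood of such $k$ and the integrand $1/\sqrt{\Delta_k}$ has only integrable singularities at $c=0,1$; the endpoint $k=16/(3\sqrt3)$, where $|y_2(x)|=1$ occurs tangentially at $c=1/3$, then follows by one-sided continuity of both sides. I expect this justification, rather than any algebraic manipulation, to be the only genuine obstacle.
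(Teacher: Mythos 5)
Your proposal is correct and follows essentially the same route as the paper: reduce to $\wt R_k$, apply Jensen's formula in $y$, invoke Lemmas~\ref{lem:y1} and~\ref{lem:y2} to see that only $\log|y_1|$ survives, differentiate under the integral sign (where $\partial_k\log y_1=1/\sqrt{\Delta_k}$), and substitute $c=\cos^2\theta$. The only difference is that you spell out the justification for differentiating under the integral sign and the endpoint case $k=16/(3\sqrt3)$, which the paper leaves implicit.
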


\begin{proof}
Using the two lemmas above we conclude that for values of $k\ge16/(3\sqrt3)$ Jensen's formula gives us
\begin{align*}
r(k)
&=\m(R_k(x,y))=\m(\wt R_k(x,y))
=\frac1{2\pi i}\int_{|x|=1}\log|y_1(x)|\,\frac{\d x}x
\\
&=\Re\biggl(\frac1{2\pi i}\int_{|x|=1}\log\frac{k+\sqrt{k^2+4(x+x^{-1})(x^3+x^{-3})}}2\,\frac{\d x}x\biggr)
-\m(x+x^{-1})
\\
&=\frac1{2\pi}\,\Re\int_{-\pi}^\pi\log\frac{k+\sqrt{k^2-16\cos^2\theta\,(3-4\cos^2\theta)}}2\,\d\theta
\\
&=\frac2\pi\,\Re\int_0^{\pi/2}\log\frac{k+\sqrt{k^2-16\cos^2\theta\,(3-4\cos^2\theta)}}2\,\d\theta
\\
&=\frac2\pi\,\Re\int_0^1\log\frac{k+\sqrt{k^2-16t^2(3-4t^2)}}2\,\frac{\d t}{\sqrt{1-t^2}}
\end{align*}
which in turn implies that
\begin{align*}
\frac{\d r(k)}{\d k}
&=\frac2\pi\,\Re\int_0^1\frac1{\sqrt{k^2-16t^2(3-4t^2)}}\,\frac{\d t}{\sqrt{1-t^2}}
\\
&=\frac2\pi\int_0^1\frac1{\sqrt{k^2-16t^2(3-4t^2)}}\,\frac{\d t}{\sqrt{1-t^2}}.
\end{align*}
It remains to perform the change $c=t^2$.
\end{proof}

If $0<k<16/(3\sqrt3)$ then the cubic polynomial $f(t)=8t^3-8t+k$ has two real zeroes on the interval $0<t<1$, since
$f(0)=f(1)=k>0$ and $f(1/\sqrt3)=k-16/(3\sqrt3)<0$. Denote them $t_1(k)<t_2(k)$.

\begin{lemma}
\label{lem:y3}
If $|\cos\theta|=|x+x^{-1}|/2=t_1(k)$ then $|y_2(x)|=1$ for $k\le16/(3\sqrt3)$.

If $|\cos\theta|=|x+x^{-1}|/2=t_2(k)$ then
$$
|y_1(x)|=1 \quad\text{for}\;\; 0<k\le2\sqrt2 \qquad\text{and}\qquad
|y_2(x)|=1 \quad\text{for}\;\; 2\sqrt2\le k\le16/(3\sqrt3).
$$
\end{lemma}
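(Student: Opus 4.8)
The plan is to use the defining property of $t_1(k),t_2(k)$ as the zeros of $f(t)=8t^3-8t+k$ in the interval $(0,1)$: at such a point one has $k=8t(1-t^2)$, and I claim this is exactly the condition that turns $\Delta_k(x)$ into a perfect square, thereby pinning one of the two roots onto the unit circle. Writing $t=|\cos\theta|=|x+x^{-1}|/2$, so that
$$
\Delta_k(x)=k^2-16t^2(3-4t^2)=k^2-48t^2+64t^4,
$$
I would substitute $k^2=64t^2(1-t^2)^2$ to get
$$
\Delta_k(x)=64t^2(1-t^2)^2-48t^2+64t^4=16t^2(2t^2-1)^2,
$$
whence $\sqrt{\Delta_k(x)}=4t\,|2t^2-1|$. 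By the symmetry $x\mapsto x^{-1}$ and the assumption $k>0$, only the moduli of the roots matter and $\Delta_k(x)$ depends on $t^2$ alone, so it suffices to treat $\cos\theta=t>0$.

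Next I would feed this into the explicit formulas for the roots: with $k=8t(1-t^2)$ and $4\cos\theta=4t$ they read
$$
\frac{k\pm\sqrt{\Delta_k(x)}}{4t}=2(1-t^2)\pm|2t^2-1|,
$$
and the two values $2(1-t^2)\pm|2t^2-1|$ are, in one order or the other, precisely $1$ and $3-4t^2$. Since $\sqrt{\Delta_k(x)}\ge0$ and $t>0$, the labelling is $y_1(x)=\max\{1,3-4t^2\}$ and $y_2(x)=\min\{1,3-4t^2\}$, which also respects $|y_1(x)|\ge|y_2(x)|$ for all $0<t<1$. Hence the unit-modulus root is $y_2(x)=1$ when $3-4t^2>1$, that is $t<1/\sqrt2$, and it is $y_1(x)=1$ when $3-4t^2<1$, that is $t>1/\sqrt2$. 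The only thing left is therefore to locate each $t_i(k)$ relative to the threshold $1/\sqrt2$.

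This I would read off from the shape of $f$. As $f'(t)=8(3t^2-1)$, the cubic decreases on $(0,1/\sqrt3)$ and increases on $(1/\sqrt3,1)$, with $f(0)=f(1)=k>0$ and $f(1/\sqrt3)=k-16/(3\sqrt3)<0$; hence $t_1(k)\in(0,1/\sqrt3)$, $t_2(k)\in(1/\sqrt3,1)$, and $f<0$ exactly on $(t_1,t_2)$. Because $1/\sqrt3<1/\sqrt2$, we always have $t_1(k)<1/\sqrt2$, giving $|y_2(x)|=1$ at $t=t_1(k)$ for every $k\le16/(3\sqrt3)$ and settling the first assertion. For $t_2(k)$ I would test the sign of $f(1/\sqrt2)=k-2\sqrt2$: if $0<k<2\sqrt2$ it is negative, so $1/\sqrt2\in(t_1,t_2)$ and $t_2(k)>1/\sqrt2$, yielding $|y_1(x)|=1$; if $2\sqrt2<k\le16/(3\sqrt3)$ it is positive, so $t_2(k)<1/\sqrt2$, yielding $|y_2(x)|=1$; and at $k=2\sqrt2$ one has $t_2(k)=1/\sqrt2$ with $y_1(x)=y_2(x)=1$, consistent with either reading.

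The individual computations are routine, so the only step that really demands care is the bookkeeping of which index the unit-modulus root carries. This is entirely governed by the dichotomy $t\lessgtr1/\sqrt2$ under the convention $|y_1|\ge|y_2|$, and it is precisely this dichotomy that produces the switch from $y_1$ to $y_2$ as $k$ crosses $2\sqrt2$ in the claim about $t_2(k)$.
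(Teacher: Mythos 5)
Your proof is correct, and it follows the same basic route as the paper's: reduce the condition $|y_j(x)|=1$ to the cubic equation $8t^3-8t+k=0$ and then sort out which index carries the unit-modulus root. The difference is one of completeness rather than of method. The paper's proof stops at the cubic and explicitly declines to reproduce the index bookkeeping (``We do not reproduce this technical but elementary analysis here''), whereas you carry it out, and your way of doing so is the clean one: substituting $k=8t(1-t^2)$ turns the discriminant into the perfect square $16t^2(2t^2-1)^2$, so the two roots are exactly $1$ and $3-4t^2$, and the whole question collapses to comparing $t_1(k),t_2(k)$ with the threshold $1/\sqrt2$ via the sign of $f(1/\sqrt2)=k-2\sqrt2$. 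All the numerical checks ($t_1<1/\sqrt3<1/\sqrt2$, the switch at $k=2\sqrt2$, the degenerate case $t_2=1/\sqrt2$ where $y_1=y_2=1$) are right. One trivial slip: the symmetry you invoke to reduce to $\cos\theta>0$ is not $x\mapsto x^{-1}$ (which fixes $\cos\theta$); the correct justification is the one you also give, namely that $|y_j(x)|$ depends only on $|\cos\theta|$ because the numerators $k\pm\sqrt{\Delta_k}$ depend on $\cos^2\theta$ alone. This does not affect the argument.
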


\begin{proof}
Note that for the values of $x$ corresponding to $t_1(k)$ and $t_2(k)$ we always have $\Delta_k(x)\ge0$, so
that both $y_1(x)$ and $y_2(x)$ are real. The solutions of $|y_1(x)|=1$ and $|y_2(x)|=1$ correspond to solving
$$
k\pm\sqrt{k^2-16t^2(3-4t^2)}=4t,
$$
where $t=|\cos\theta|=|x+x^{-1}|/2$. By elementary manipulations the latter equation reduces to $8t^3-8t+k=0$,
and the remaining task is to distinguish whether we get $|y_1(x)|=1$ or $|y_2(x)|=1$. We do not reproduce this
technical but elementary analysis here.
\end{proof}

\begin{proposition}
\label{prop3b}
If $0<k<16/(3\sqrt3)$ then
\begin{equation}
\frac{\d r(k)}{\d k}
=\frac1\pi\biggl(\int_0^{t_1(k)^2}+\int_{t_2(k)^2}^1\biggr)\frac{\d c}{\sqrt{c(1-c)(k^2-48c+64c^2)}},
\label{ell-2}
\end{equation}
where $t_1(k)$ and $t_2(k)$, $0<t_1(k)<1/\sqrt3<t_2(k)<1$, are the real zeroes of the polynomial $8t^3-8t+k$.
\end{proposition}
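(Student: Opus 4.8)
The plan is to follow the derivation of Proposition~\ref{prop3a}, but now keeping careful track of the subarcs of the unit circle on which $|y_1(x)|$ and $|y_2(x)|$ exceed~$1$, because for $0<k<16/(3\sqrt3)$ neither root stays on one side of the unit circle. Starting from Jensen's formula together with $\m(x+x^{-1})=0$, and using that the integrand depends on $\theta$ only through $c=\cos^2\theta$ (whence a fourfold symmetry), I would write
$$
r(k)=\frac2\pi\int_0^1\bigl(\log^+|y_1|+\log^+|y_2|\bigr)\,\frac{\d t}{\sqrt{1-t^2}},\qquad t=|\cos\theta|,\ c=t^2.
$$
Since $\log^+|y_i|$ vanishes on the level set $|y_i|=1$, the boundary terms produced by differentiating through the moving level set cancel, and one obtains
$$
\frac{\d r(k)}{\d k}=\frac2\pi\int_{\{|y_1|>1\}}\frac{\d}{\d k}\log|y_1|\,\frac{\d t}{\sqrt{1-t^2}}+\frac2\pi\int_{\{|y_2|>1\}}\frac{\d}{\d k}\log|y_2|\,\frac{\d t}{\sqrt{1-t^2}}.
$$

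Next I would record the elementary derivatives. Where the two roots are real (that is, $\Delta_k\ge0$) a short computation gives
$$
\frac{\d}{\d k}\log|y_1|=\frac1{\sqrt{\Delta_k}},\qquad\frac{\d}{\d k}\log|y_2|=-\frac1{\sqrt{\Delta_k}},
$$
whereas on the complex-conjugate locus $\Delta_k<0$ one has $|y_1|=|y_2|=|3-4c|^{1/2}$, which is independent of~$k$ and therefore contributes nothing to the derivative. Passing to the variable $c=t^2$ converts $\frac2\pi\frac{\d t}{\sqrt{1-t^2}}$ into $\frac1\pi\frac{\d c}{\sqrt{c(1-c)}}$ and $\sqrt{\Delta_k}\sqrt{c(1-c)}$ into $\sqrt{c(1-c)(k^2-48c+64c^2)}$, so the integrand of \eqref{ell-2} emerges with a $+$ sign coming from $y_1$ and a $-$ sign coming from $y_2$.

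The heart of the matter is then to locate the two sets $\{|y_1|>1\}$ and $\{|y_2|>1\}$ in the variable~$c$ and to show that, after cancellation, the surviving region is exactly $[0,t_1(k)^2]\cup[t_2(k)^2,1]$. Here I would use that in the real-root regime $|y_1|\ge|y_2|$, so $\{|y_2|>1\}\subseteq\{|y_1|>1\}$ and on the common part the contributions $\pm1/\sqrt{\Delta_k}$ cancel; only the region where $|y_1|>1$ but $|y_2|\le1$ remains. By Lemma~\ref{lem:y3} the unit-circle crossings of $|y_1|$ and $|y_2|$ occur precisely at $c=t_1(k)^2$ and $c=t_2(k)^2$, the squared real zeroes of $8t^3-8t+k$, while Lemmas~\ref{lem:x} and~\ref{lem:y1} determine which root crosses and in which direction. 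I expect the main obstacle to be this sign-and-direction bookkeeping, which splits into the two subcases $0<k<2\sqrt2$, where $|y_1|$ itself dips below~$1$ near $c=c_+(k)$, and $2\sqrt2\le k<16/(3\sqrt3)$, where Lemma~\ref{lem:y1} keeps $|y_1|\ge1$ throughout; in each subcase a direct check of the crossing pattern shows the surviving set is $[0,t_1(k)^2]\cup[t_2(k)^2,1]$, giving~\eqref{ell-2}. As a consistency check, letting $k\to16/(3\sqrt3)$ merges $t_1,t_2\to1/\sqrt3$, so the surviving set fills up the whole interval $[0,1]$ and recovers Proposition~\ref{prop3a}.
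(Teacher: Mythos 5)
Your proposal is correct and follows essentially the same route as the paper: Jensen's formula with $\m(x+x^{-1})=0$, reduction to $t=|\cos\theta|$, differentiation under the integral with vanishing boundary terms justified by Lemma~\ref{lem:y3}, the derivatives $\pm1/\sqrt{\Delta_k}$ for the two roots (with the complex-conjugate locus contributing nothing), and the case split at $k=2\sqrt2$ using Lemmas~\ref{lem:x}--\ref{lem:y3}; your set-difference phrasing of the final cancellation ($\{|y_2|>1\}\subseteq\{|y_1|>1\}$, so only $\{|y_1|>1,\,|y_2|\le1\}=[0,t_1]\cup[t_2,1]$ survives) is just a compact restatement of the paper's explicit combination of integrals in each subcase. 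The only blemish is the reference to ``$c_+(k)$,'' which is notation from Section~\ref{sP}, not this section.
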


\begin{proof}
To each $x$ on the unit circle we assign the real parameter $\theta$ such that $x=e^{i\theta}$ and real parameter $t=|x+x^{-1}|/2=|\cos\theta|\in[0,1]$.
The analysis of Lemmas~\ref{lem:x} to~\ref{lem:y3} shows that the ranges of $t$ that correspond to $|y_1(x)|\ge1$ and $|y_2(x)|\ge1$
are as follows: if $0<k<2\sqrt2$ then
$$
|y_1(x)|\ge1 \quad\text{for}\;\; t\in[0,1/\sqrt2]\cup[t_2(k),1]
\qquad\text{and}\qquad
|y_2(x)|\ge1 \quad\text{for}\;\; t\in[t_1(k),1/\sqrt2];
$$
and if $2\sqrt2\le k<16/(3\sqrt3)$ then
$$
|y_1(x)|\ge1 \quad\text{for}\;\; t\in[0,1]
\qquad\text{and}\qquad
|y_2(x)|\ge1 \quad\text{for}\;\; t\in[t_1(k),t_2(k)].
$$
Therefore,
\begin{align*}
r(k)
&=\frac1{2\pi i}\int_{|x|=1}\log\max\{|y_1(x)|,1\}\,\frac{\d x}x
+\frac1{2\pi i}\int_{|x|=1}\log\max\{|y_2(x)|,1\}\,\frac{\d x}x
\\
&=\frac2\pi\,\Re\biggl(\int_0^{1/\sqrt2}+\int_{t_2(k)}^1\biggr)\log\frac{k+\sqrt{k^2-16t^2(3-4t^2)}}{4t}\,\frac{\d t}{\sqrt{1-t^2}}
\\ &\qquad
+\frac2\pi\,\Re\int_{t_1(k)}^{1/\sqrt2}\log\frac{k-\sqrt{k^2-16t^2(3-4t^2)}}{4t}\,\frac{\d t}{\sqrt{1-t^2}}
\\ \intertext{if $0<k<2\sqrt2$ and}
&=\frac2\pi\,\Re\int_0^1\log\frac{k+\sqrt{k^2-16t^2(3-4t^2)}}{4t}\,\frac{\d t}{\sqrt{1-t^2}}
\\ &\qquad
+\frac2\pi\,\Re\int_{t_1(k)}^{t_2(k)}\log\frac{k-\sqrt{k^2-16t^2(3-4t^2)}}{4t}\,\frac{\d t}{\sqrt{1-t^2}}
\end{align*}
if $2\sqrt2\le k<16/(3\sqrt3)$.
Differentiating $r(k)$ we obtain
\begin{align*}
\frac{\d r(k)}{\d k}
&=\frac2\pi\,\Re\biggl(\int_0^{1/\sqrt2}+\int_{t_2(k)}^1-\int_{t_1(k)}^{1/\sqrt2}\biggr)\frac1{\sqrt{k^2-16t^2(3-4t^2)}}\,\frac{\d t}{\sqrt{1-t^2}}
\\ \intertext{if $0<k<2\sqrt2$ and}
&=\frac2\pi\,\Re\biggl(\int_0^1-\int_{t_1(k)}^{t_2(k)}\biggr)\frac1{\sqrt{k^2-16t^2(3-4t^2)}}\,\frac{\d t}{\sqrt{1-t^2}}
\end{align*}
if $2\sqrt2\le k<16/(3\sqrt3)$; here we have observed that the additionally occurring integrals in the process of differentiating
vanish because $\Re\log y_j(x)=\log|y_j(x)|=0$ by Lemma~\ref{lem:y3} in the corresponding cases.

Note that for both $0<k<2\sqrt2$ and $2\sqrt2\le k<16/(3\sqrt3)$ the result is the same:
$$
\frac{\d r(k)}{\d k}
=\frac2\pi\,\Re\biggl(\int_0^{t_1(k)}+\int_{t_2(k)}^1\biggr)\frac1{\sqrt{k^2-16t^2(3-4t^2)}}\,\frac{\d t}{\sqrt{1-t^2}}.
$$
To complete the proof we apply the substitution $t^2=c$.
\end{proof}

\begin{remark}
\label{rem2}
The integral in~\eqref{ell-1} is elliptic, while the integrals in~\eqref{ell-2} are incomplete elliptic:
the ``completion'' of the integrals will require integrating along $c\in(0,(3-\sqrt{9-k^2})/8)\cup((3+\sqrt{9-k^2})/8,1)$ if $0<k<3$
or $c\in(0,1)$ if $3\le k<16/(3\sqrt3)$ rather than along $c\in(0,t_1(k)^2)\cup(t_2(k)^2,1)$.
The incompleteness serves as a reason for the Mahler measure $r(k)$ not to be rationally related to $L'(E_k,0)$ for $|k|<16/(3\sqrt3)$.
\end{remark}

\begin{proposition}
\label{equ}
For $k$ positive real, $k\ne3$,
\begin{equation}
\int_0^1\frac{\d c}{\sqrt{c(1-c)(64c^2-48c+k^2)}}
=\int_{-12}^{-k(k-\sqrt{16+k^2})/2}\frac{\d v}{\sqrt{-(v+12)(v^2+k^2v-4k^2)}}.
\label{landen}
\end{equation}
\end{proposition}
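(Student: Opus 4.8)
The plan is to reduce each side of \eqref{landen} to Legendre normal form $\tfrac{2}{\sqrt D}K(m)$, where $K(m)=\int_0^{\pi/2}\d\psi/\sqrt{1-m\sin^2\psi}$, and to verify that the two sides produce the \emph{same} modulus $m$ and the \emph{same} normalising constant $D$. I treat the generic regime $k>3$ first, where both integrals are honest real complete elliptic integrals, and recover $0<k<3$ afterwards. Write the cubic on the right as $-(v+12)(v-v_+)(v-v_-)$ with $v_\pm=-k(k\mp\sqrt{16+k^2})/2$, and factor the quartic on the left as $c(1-c)(64c^2-48c+k^2)=64\,c(1-c)\bigl((c-p)^2+q^2\bigr)$ with $p=3/8$, $q=\sqrt{k^2-9}/8$; for $k>3$ the numbers $p\pm iq=\frac{3\pm i\sqrt{k^2-9}}8$ are the complex roots of $64c^2-48c+k^2$, while $0$ and $1$ are real.

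For the right-hand integral all three roots are real and ordered $v_-<-12<v_+$ (with $v_-=-12$ exactly at $k=3$), and the integration runs between the adjacent roots $-12$ and $v_+$. Hence the standard reduction $\int_{e_2}^{e_3}\d v/\sqrt{(v-e_1)(v-e_2)(e_3-v)}=\tfrac{2}{\sqrt{e_3-e_1}}K\!\bigl(\tfrac{e_3-e_2}{e_3-e_1}\bigr)$ applies with $(e_1,e_2,e_3)=(v_-,-12,v_+)$. This gives the normalising constant $D=v_+-v_-=k\sqrt{16+k^2}$ and the modulus
\[
m_{\mathrm R}=\frac{v_++12}{v_+-v_-}=\frac{24-k^2+k\sqrt{16+k^2}}{2k\sqrt{16+k^2}}.
\]

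For the left-hand integral I use the classical reduction of an elliptic integral with two real roots $0,1$ and a complex-conjugate pair $p\pm iq$ (Byrd and Friedman). Setting $A^2=(1-p)^2+q^2$ and $B^2=(0-p)^2+q^2$, a short computation yields $A=\sqrt{16+k^2}/8$, $B=k/8$, hence $AB=k\sqrt{16+k^2}/64$, and the reduction reads $\int_0^1\d c/\sqrt{c(1-c)((c-p)^2+q^2)}=\tfrac{2}{\sqrt{AB}}K(m_{\mathrm L})$ with $m_{\mathrm L}=\bigl(1-(A-B)^2\bigr)/(4AB)$. Dividing out the overall factor $64$ (equivalently the $8$ pulled out of the square root) turns the prefactor into $\tfrac18\cdot\tfrac{2}{\sqrt{AB}}=\tfrac{2}{\sqrt{k\sqrt{16+k^2}}}$, matching $D$ from the right, while simplification gives
\[
m_{\mathrm L}=\frac{1-(A-B)^2}{4AB}=\frac{24-k^2+k\sqrt{16+k^2}}{2k\sqrt{16+k^2}}=m_{\mathrm R}.
\]
Thus both sides equal $\tfrac{2}{\sqrt{k\sqrt{16+k^2}}}K(m)$ with the common modulus $m=m_{\mathrm L}=m_{\mathrm R}$, establishing \eqref{landen} for $k>3$.

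For $0<k<3$ the pair $p\pm iq$ becomes the real numbers $(3\pm\sqrt{9-k^2})/8\in(0,1)$ and $v_-$ rises above $-12$, so on part of each integration range the radicand turns negative; taking real parts (as everywhere in the paper) splits each side into complete integrals between adjacent real roots, and the same modulus computation—now with $m>1$, via the reciprocal-modulus transformation—again gives equality. The value $k=3$ is genuinely excluded since there $m=1$ and $K(m)$ degenerates, which is precisely why the statement requires $k\ne3$. I expect the main obstacle to be the algebraic core of the third paragraph: checking that $m_{\mathrm L}$ from the two-real-two-complex reduction collapses to exactly $m_{\mathrm R}$, together with the bookkeeping of the normalising constants (the factor $64$ and $\sqrt{AB}$ versus $\sqrt{v_+-v_-}$) so that the prefactors coincide \emph{identically} rather than merely up to a constant, and the attendant care with the reality of the integrals as $k$ crosses~$3$.
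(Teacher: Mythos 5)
Your route is genuinely different from the paper's. The paper proves \eqref{landen} by an explicit chain of substitutions: a M\"obius change $c=k(1+t)/\bigl(k+\sqrt{k^2+16}+(k-\sqrt{k^2+16})t\bigr)$ followed by $u=t^2$ and an affine map in $u$, which carries the left-hand integral directly into the right-hand one, uniformly in $k>0$. You instead reduce each side separately to Legendre normal form and match the multiplier and the modulus. For $k>3$ your computation is correct and complete (modulo citing the standard reductions): I checked that $A=\sqrt{k^2+16}/8$, $B=k/8$, that the prefactors both come out to $2/\sqrt{k\sqrt{k^2+16}}$, and that
$$
\frac{1-(A-B)^2}{4AB}=\frac{24-k^2+k\sqrt{k^2+16}}{2k\sqrt{k^2+16}}=\frac{v_++12}{v_+-v_-},
$$
so both sides equal $\tfrac{2}{\sqrt{k\sqrt{k^2+16}}}\,K(m)$ with the same parameter $m$. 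What your approach buys is an identification of the common value as an explicit complete elliptic integral $K(m)$; what it costs is that the argument fragments into cases according to the configuration of the roots, whereas the paper's substitution is case-free.

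The genuine gap is the range $0<k<3$. There the two roots of $64c^2-48c+k^2$ become real and lie in $(0,1)$, and $v_-$ moves above $-12$, so both integrands change sign inside the integration range and neither side of \eqref{landen} is a real integral; the identity must be read for the complex-valued integrals along the real path (the paper's substitution proves exactly that). Your sentence ``the same modulus computation --- now with $m>1$, via the reciprocal-modulus transformation --- again gives equality'' is not a proof: the two-real-two-complex reduction you used no longer applies (you now need the four-real-root reduction on the left), and you must match the real parts \emph{and} the imaginary parts separately, i.e.\ verify two new pairs of modulus/multiplier identities, none of which you have written down. Also, ``taking real parts (as everywhere in the paper)'' misreads the statement: Proposition~\ref{equ} is not stated with real parts. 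Since the paper only invokes the proposition for $k\ge16/(3\sqrt3)>3$, your argument does cover everything needed for Theorems~\ref{th1} and~\ref{th2}, but as a proof of the proposition as stated it is incomplete below $k=3$. (Your explanation of the exclusion of $k=3$ is correct in spirit: there $v_-=-12$, the cubic acquires a double root at the endpoint, $m=1$, and both integrals diverge.)
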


\begin{proof}
Applying the substitution
$$
c=\frac{k(1+t)}{k+\sqrt{k^2+16}+(k-\sqrt{k^2+16})t}
$$
to the integral on the left-hand side we obtain
\begin{align*}
&
\int_0^1\frac{\d c}{\sqrt{c(1-c)(64c^2-48c+k^2)}}
\\ &\quad
=\sqrt2\int_{-1}^1\frac{\d t}{\sqrt{(1-t^2)(k^2-24+k\sqrt{k^2+16}+(-k^2+24+k\sqrt{k^2+16})t^2)}}
\\ &\quad
=2\sqrt2\int_0^1\frac{\d t}{\sqrt{(1-t^2)(k^2-24+k\sqrt{k^2+16}+(-k^2+24+k\sqrt{k^2+16})t^2)}}
\\ \intertext{(after the change $u=t^2$)}
&\quad
=\sqrt2\int_0^1\frac{\d u}{\sqrt{u(1-u)(k^2-24+k\sqrt{k^2+16}+(-k^2+24+k\sqrt{k^2+16})u)}}.
\end{align*}
Now the substitution
$$
u=\frac{2(v+12)}{-k^2+24+k\sqrt{k^2+16}}
$$
into the latter integral results in the the right-hand side in~\eqref{landen}.
\end{proof}

\begin{remark}
\label{rem3}
For $k>0$, $k\ne3$, the identity in Proposition~\ref{equ} relates the periods of the elliptic curves $E_k$ in~\eqref{Ek}
(which is isomorphic to $u^2=(v+12)(v^2+k^2v-4k^2)$) and
$$
\wh E_k:d^2=c(1-c)(64c^2-48c+k^2).
$$
The curves $E_k$ and $\wh E_k$ are not isomorphic but the latter one happens to be a quadratic twist of the former.
\end{remark}

\begin{proof}[Proof of Theorem~\textup{\ref{th2}}]
The equality of elliptic integrals in \eqref{landen} means that the derivatives of $p(k)$ and $r(k)$ coincide for $k\ge16/(3\sqrt3)$.
Thus $p(k)=r(k)+C$ for the range of~$k$, and the asymptotics \eqref{asymp} implies that $C=0$ and finishes the proof of the theorem.
\end{proof}

\section{Accurateness of Theorem~\ref{th2} and related comments}
\label{sfinale}

Though our Remarks \ref{rem1} and \ref{rem2} are aimed at explaining the choice of ranges for $k$ in Theorems~\ref{th1} and~\ref{th2},
in conclusion we would like to specifically address the difference between $\m(P_3)$ and $\m(R_3)$. The choice $k=3$ corresponds to a simultaneous
degeneration in the families of curves $P_k(x,y)=0$ and $R_k(x,y)=0$.

The curve
$$
P_3(x,y)=(x^2+x+1)y^2+3x(x+1)y+x(x^2+x+1)=0
$$
has genus 1; it is isomorphic to the conductor 15 elliptic curve $y^2+xy+y=x^3+x^2$
which has Cremona label \texttt{15a8} \cite[\href{http://www.lmfdb.org/EllipticCurve/Q/15/a/7}{Curve 15.a7}]{LMFDB}.
The proof of the evaluation
\begin{equation}
\m(P_3)=\frac16L'(\chi_{-15},-1)=0.99905183\dots
\label{mP3}
\end{equation}
was given in \cite[Example~3]{BRV03} (by two different methods!).

On the other hand,
$$
R_3(x,y)=(x+y-1)(x^2-xy+y^2+x+y)
$$
so that
$$
\m(R_3)=\m(x+y-1)+\m(x^2-xy+y^2+x+y)=L'(\chi_{-3},-1)+\m(x^2-xy+y^2+x+y).
$$
Following the technology and notation in \cite{BRV03} to compute the Mahler measure of $A(x,y)=x^2-xy+y^2+x+y$,
we first fix the rational parametrisation
$$
x=\frac{t-2}{t^2-t+1}, \qquad y=\frac{-t-1}{t^2-t+1},
$$
and compute the resultant of $A(x,y)$ and $A^*(x,y)=x^2y^2A(1/x,1/y)$:
$$
\operatorname{Res}_y(A,A^*)=3x^2(x^4+x^3-x^2+x+1).
$$
The quartic polynomial has exactly two complex conjugate zeroes
$$
x_1=\frac{3+i\sqrt{5+2\sqrt{13}}}{1+\sqrt{13}}
$$
and $x_1^{-1}$ of absolute value~1. The corresponding values of $y$ satisfying $|y|=1$ and $A(x,y)=0$ are
$y=y_1=x_1^{-1}$ for $x=x_1$ and $y=x_1$ for $x=x_1^{-1}$. The pair $(x_1,y_1)$ is generated by
$$
t_1=\frac{1-i\sqrt{5+2\sqrt{13}}}2.
$$
Note that in this case
\begin{align*}
\eta(x,y)
&=\eta\biggl(\frac{t-2}{t^2-t+1},\frac{-t-1}{t^2-t+1}\biggr)
\\
&=\d D\biggl(-\biggl[\frac{t+1}3\biggr]+2\biggl[\frac{t+1}{\zeta_6+1}\biggr]+2\biggl[\frac{t+1}{\zeta_6^{-1}+1}\biggr]\biggr),
\end{align*}
where the $1$-form $\eta(g,h)=\log|g|\,\d\arg h-\log|h|\,\d\arg g$ is attached to rational nonconstant functions $g$ and $h$ and
$$
D(z)=\operatorname{Im}\sum_{n=1}^\infty\frac{z^n}{n^2}+\arg(1-z)\,\log|z|
$$
denotes the Bloch--Wigner dilogarithm. Then by results in \cite{BRV03} the Mahler measure of $A(x,y)$ is equal to
$$
\m(A)
=\frac1\pi\biggl(D\biggl(\frac{t_1+1}3\biggr)-2D\biggl(\frac{t_1+1}{\zeta_6+1}\biggr)-2D\biggl(\frac{t_1+1}{\zeta_6^{-1}+1}\biggr)\biggr)
=0.68844794\dots\,.
$$
The resulting measure $\m(R_3)=1.01151388\dots$ visually appears to be different from~\eqref{mP3} confirming that $\m(P_k)\ne\m(R_k)$ at least for $k=3$.
Furthermore, $\m(R_3)$ does not seem to be a $\mathbb Q$-linear combination of $L'(\chi_{-3},-1)$ and $L'(\chi_{-15},-1)$.

\medskip
It would be interesting to establish the expected evaluation $\m(R_4)=-\frac13L'(E_{224\text{a}},0)$, hence also
for $\m(P_4)$ and $\m(Q_6)$, by using the recent formula of Brunault \cite{Bru15} for the regulator of Siegel units.
Note that the elliptic curve $R_4(x,y)=0$ does not possess a modular-unit parametrisation (so that the formula from \cite{Zud14} is not applicable)
and it is isomorphic to the curve $y^2=x^3+x^2-8x-8$ which has Cremona label \texttt{224a2}
\cite[\href{http://www.lmfdb.org/EllipticCurve/Q/224/a/1}{Curve 224.a1}]{LMFDB}.

Another related conjecture of Boyd \cite[Eq.~(3-12)]{Boy98} states that
$$
\m(Q_{-1})=\frac13L'(\chi_{-7},-1)+\frac16L'(\chi_{-15},-1)
=\frac{7\sqrt7}{12\pi}L(\chi_{-7},2)+\frac{5\sqrt{15}}{8\pi}L(\chi_{-15},2).
$$
Here $Q_{-1}(x,y)=0$ is an elliptic curve of conductor $210=2\times3\times5\times7$,
which is isomorphic to $y^2+xy=x^3+x^2-3x-3$ with Cremona label \texttt{210d1} \cite[\href{http://www.lmfdb.org/EllipticCurve/Q/210/a/3}{Curve 210.a3}]{LMFDB}.
Numerics indicates the lack of a modular-unit parametrisation in this case, though a suitable
parametrisation by Siegel units and the principal result from \cite{Bru15} are expected to confirm Boyd's observation for $\m(Q_{-1})$.

\medskip
\textbf{Acknowledgements.}
We owe our gratitude to Fran\c cois Brunault for helpful comments and discussions on a preliminary version of the note.

A part of the work was done during the authors' visit in the Centre de recherches math\'ematiques, Universit\'e de Montr\'eal,
and the second author's stay at the Max-Planck-Institut f\"ur Mathematik, Bonn, in Spring 2015.
We thank the staff of the institutes for the excellent conditions we experienced when conducting this research.

The second author acknowledges the support of the Australian Research Council and the Max-Planck-Institut f\"ur Mathematik.

%==================================================

\end{document}